\documentclass{tran-l}
\usepackage[centertags]{amsmath}
\usepackage{amsfonts}
\usepackage{amssymb}
\usepackage{amsthm}
\usepackage{newlfont}
\usepackage{lscape}
\usepackage{graphicx}
%\usepackage{tikz}
%\usetikzlibrary{positioning}
% Line spacing -----------------------------------------------------------
\newlength{\defbaselineskip}
\setlength{\defbaselineskip}{\baselineskip}
\newcommand{\setlinespacing}[1]%
          {\setlength{\baselineskip}{#1 \defbaselineskip}}

%==========================

% MATH -------------------------------------------------------------------

 \newcommand{\s}{\subseteq}

 \newcommand{\rl}{\mathbb{R}}

 \newcommand{\N}{\mathbb{N}}

 \newcommand{\bth}{\begin{thm}}
 \newcommand{\brm}{\begin{rem}}
 \newcommand{\erm}{\end{rem}}
 \newcommand{\bcor}{\begin{cor}}
 \newcommand{\ecor}{\end{cor}}
 \newcommand{\ed}{\end{document}}
 \newcommand{\bmb}{\begin{mbs}}
 \newcommand{\emb}{\end{mbs}}
 
 \newcommand{\beq}{\begin{eqnarray*}}
 \newcommand{\eeq}{\end{eqnarray*}}
 %%% ----------------------------------------------------------------------
% THEOREMS ---------------------------------------------------------------
\theoremstyle{plain}
\newtheorem{thm}{Theorem}%[section]
\newtheorem{cor}[thm]{Corollary}
\newtheorem{lem}[thm]{Lemma}

\newtheorem{exmpl}[thm]{Example}

\theoremstyle{definition}
\newtheorem{defn}[thm]{Definition}%[section]
\newtheorem{rem}[thm]{Remark}%[section]

\numberwithin{equation}{section}

%%% ----------------------------------------------------------------------

\begin{document}

\title{On linear ternary intersection sequences and their properties}

\author{Mahdi Saleh}
\address{National organization for development of exceptional talents (NODET), Tehran, Iran}
\author{Majid Jahangiri}
\address{School of Mathematics, Institute for Research in Fundamental Sciences (IPM), Tehran, Iran.}
\begin{abstract}
Let $D^+$ be the first octant of the Euclidean space and consider the integral cube grid $G$ in $D^+$. The intersections of each line with $G$ form an infinite sequence of three letters which can be considered as an extension of well-known \emph{Sturmian words}. A classification of such linear ternary sequences is presented and a family of examples is constructed from a notable sequence $S^M$ which could be viewed as an analogue of the \emph{Fibonacci word} in the family of Sturmian words. The factor complexity and the palindromic complexity of these linear ternary sequences are also studied. The last result stated is that each ternary sequence with factor complexity $n+2$ is the intersection sequence of a line.
\end{abstract}

%\begin{keywords}
%Ternary cutting sequences, Sturmian words, Factor complexity, Balanced words
%\end{keywords}
\maketitle

\setlinespacing{1}

%----------------------------------------------------------
\section{Introduction}

A \emph{cutting sequence} is an infinite sequence over the alphabet $A=\{0,1\}$ that exhibits  the meetings of a line $L$ with the vertical and horizontal segments of the square grid of a two dimensional integral lattice in the first quadrant of Euclidean plane. The Cutting sequences were first founded by Christoffel[1] and appeared in works of Andrey Markov [2] for finding all natural integers which satisfies $a^2+b^2+c^2=3abc$. A more geometric point of view is studied by Caroline Series [3]. Cutting sequences can be defined in several ways in geometry, combinatorics, number theory and algebra. For example, consider a binary sequence such that between each pair of factors of the same length, the difference between the number of the occurrences of every digit is at most 1. This represents the cutting sequences as an algebraic object. Another description is that if for each positive integer $n$, the binary sequence $x$ contains exactly $n+1$ different factors with length $n$, $x$ is also a cutting sequence. The sequences from this point of view are also called \emph{Sturmian words}. In fact, Sturmian words are the cutting sequences of lines with irrational slope. These interpretations show that the cutting sequences establish connections between geometry, combinatorics, number theory and algebra.

Sturmian words were first founded by M. Morse and G. A. Hedlund [4] and studied by many others. One can find a survey about this family of words in [5].  One of the most well-known Sturmian words is the \emph{Fibonacci word} which is the cutting sequence of the line with slope $\varphi=\frac{1+\sqrt{5}}{2}$. In this paper, we introduce a family of sequences similar to the cutting sequences which are generated by the lines in the 3-dimensional Euclidean space. This construction gives ternary sequences on the alphabet $T=\{0,1,2\}$. Similar to the results proved about Sturmian words, we will try to formulate some properties about the ternary sequences obtained in this way. We also illustrate a ternary sequence derived from the binary Fibonacci word called $S^M$ which has noticeable  properties commensurable with the Fibonacci word. For example, for each positive integer $n$, the sequence $S^M$ has exactly $n+2$ different factors of length $n$ which is the minimal value in the family of ternary sequences. This sequence is the intersection sequence of a line $L^M$ with azimuthal angle $\theta=\arctan(\varphi)$ and polar angle $\phi= \arccos(\frac{1}{4(\varphi+1)})$.  Furthermore, the slopes of the orthogonal projections of the line $L^M$ in $xy,xz,yz$ planes are $\varphi,\frac{1}{\varphi+1},\varphi$, respectively. This example leads us to a family of linear ternary sequences with the same properties as $S^M$. Then we prove that all of the ternary sequences with the factor complexity $n+2$ are the intersection sequences of lines.

The subjects in this paper appear as follows: in Section \ref{two-d} the two dimensional case is reviewed. Then the three dimensional case is introduced in Section \ref{three-d}. In the last Section, some combinatorial properties of linear ternary intersection sequences are investigated.

\section{Two Dimensional Case} \label{two-d}
Let $D^+$ be the first quadrant of the Euclidean plane and let $G$ denote the integer lattice grid in $D^+$, i.e., $G=\{ax+by \in D^+ \: | \: a,b\in\N\cup \{0\}, x,y\in\mathbb{R}^+, \: a\cdot b=0   \}$. By $L:y=\lambda x$ we denote a line passing through the origin with a positive slope.
\begin{defn}
Let $L:y=\lambda x$ be a line in the first quadrant, and construct the meeting sequence of $L$ with the grid $G$ in a way that for each horizontal meeting we write one 0 and for each vertical meeting we write one 1, sequentially. If $L$  passes through a lattice point, excluding the origin, we write 10 for such a meeting. The corresponding binary sequence shows the {\it cutting sequence} of $L$, and is denoted by $Cs(L)$ or $Cs(\lambda)$.
\end{defn}
\noindent For example, the cutting sequence corresponding to the line $L’:y=\varphi x$ is
\beq
Cs(\varphi)=0100101001001010010\ldots
\eeq
where $\varphi=\frac{1+\sqrt{5}}{2}$ is known as the \emph{golden ratio}. This sequence is called the \emph{Fibonacci word}. One can see that the ratio of appearance of 0's and 1's is related to the slope of the associated line. A cutting sequence can also be presented compactly by replacing each consecutive block of the same digits with a power representation. It is obvious that the cutting sequence of a line with rational slope is periodic and the cutting sequence of a line with irrational slope is aperiodic. For example $Cs(2)=10^210^210^2\ldots$ and $Cs(\varphi)=010^21010^210^21010^210\ldots$.

\subsection{ Derivation of a cutting sequence}
Cutting sequences have geometric and algebraic interpretations. Some concepts which connect these concepts are recalled below.
%We recall some properties of cutting sequences which describe some connections of the geometric and algebraic properties of them.
\bth \label{form}
Let $L:y=\lambda x$ be a line in the first quadrant and suppose $Cs(\lambda)$ is its associated cutting sequence in compact form. If $0<\lambda<1$ and $\frac{1}{\lambda}$ is not an integer, then we have
\begin{eqnarray}
Cs(\lambda)\in\{01^{\lfloor\frac{1}{\lambda}\rfloor},01^{\lfloor\frac{1}{\lambda+1}\rfloor}\}^{\mathbb{N}}.
\end{eqnarray}
 If $\frac{1}{\lambda}$ is an integer, then $Cs(\lambda)$ has the form
 \begin{eqnarray}
 Cs(\lambda)\in\{01^{\lfloor\frac{1}{\lambda}\rfloor}\}^{\mathbb{N}}.
 \end{eqnarray}
 If $\lambda>1$ and $\lambda$ is not an integer, then $Cs(\lambda)$ has the form
\begin{eqnarray}
Cs(\lambda)\in\{10^{\lfloor\lambda\rfloor},10^{\lfloor\lambda+1\rfloor}\}^{\mathbb{N}},
\end{eqnarray}
and If $\lambda$ is an integer, then the form of its cutting sequence is
\begin{eqnarray}
Cs(\lambda)\in\{10^{\lfloor\lambda\rfloor}\}^{\mathbb{N}}.
\end{eqnarray}
By $\lfloor x\rfloor$ we mean the greatest integer number less than or equal to $x$.
\end{thm}
\begin{proof} It is proved by contradiction. One can find the  details in [3].
\end{proof}

\begin{defn}\label{val}
For the line $L:y=\lambda x$, the value of $L$, which is denoted by $val(L)$, is defined as $\lfloor\lambda\rfloor$ if $\lambda>1$, and $\lfloor1/\lambda\rfloor$ if $0<\lambda<1$. The case $\lambda=1$ is a trivial case which will not be considered at all.
\end{defn}

\begin{defn}\label{deriv}
Suppose $L$ is a line with slope $\lambda>1$. Replacing each factor $10^{val(L)}$ with $0^\prime$ and replacing each remaining 1 with $1^\prime$ in $Cs(L)$ results in a new sequence which is called the \emph{derivation of $Cs(L)$} and is denoted by  $Cs^{(1)}(L)$. If a sequence is derived $k$ times, the final sequence is denoted by $Cs^{(k)}(L)$.
\end{defn}

Two important theorems about derivation sequences are presented. Their proofs can be found in [3].
\bth\label{derivation}
For the line $L$ in the plane with slope $\lambda>1$, the derivation sequence $Cs^{(1)}(\lambda)$ is the cutting sequence of a new line $L'$. The slope $\lambda'$ of $L'$ is $\lambda'=\lambda - val(L)$. In the case of $0<\lambda<1$, the associated cutting sequence of $L'$ is $Cs^{(1)}(1/\lambda)$ and its slope is $\lambda'=\frac{1}{\frac{1}{\lambda}-val(L)}$.
\end{thm}
The value of the $k$th derivation of the line $L$ is denoted by $val_k$. It is remarkable that the sequence $Cs(\lambda)$ is equal to $Cs(1/\lambda)$ in which the roles of 0's and 1's are interchanged.

\bth\label{cont-frac}
Let $L$ be a line in $D^+$ and let $V_L=val_0, val_1, val_2,...$ be the sequence of values of the derivations of $L$. Then
\begin{eqnarray}
\lambda=val_0+\frac{1}{val_1+\frac{1}{val_2+\frac{1}{val_3+\frac{1}{val_4+...}}}} .
\end{eqnarray}
\end{thm}
\subsection{Sturmian words and cutting sequences}
A summary of the relationship between the cutting sequences of lines with irrational slopes and other important binary words is presented here.
\begin{defn}
Consider a(n infinite) word $x=x_1x_2\ldots$ . Any finite word $x_{ij}=x_ix_{i+1}...x_j$ of $x$, ($i\leq j$), is a \emph{factor} of $x$ of length $j-i+1$. The set of factors of the word $x$ with cardinality $n$ is denoted by $f_n(x)$.
\end{defn}
\noindent Two factors $x_{ij}$ and $x_{kl}$ of a word $x$ with length $n$ are the same if for each $0\leq t\leq n$, $x_{i+t}=x_{k+t}$.
\begin{defn}
Let $x=x_1x_2x_3\ldots$ be an infinite word. For each $n\in\N$, the \emph{$n$-complexity} of $x$, which is denoted by $C(x,n)$, is the number of different factors of $x$ with length $n$. Clearly, $C(x,n)=\textrm{Card}(f_n(x))$.
\end{defn}
An infinite word $x$ such that $C(x,n)=n+1$ for all $n\in\N$ is called a {\it Sturmian word}.
\begin{defn}
Consider the infinite word $x=x_1x_2x_3\ldots$ . This word is \emph{C-balanced} if for each two factors $u$ and $w$ with equal length, $||w|_a-|u|_a|\leq C$ for all letters $a$, where $|w|_a$ denotes the number of occurrences of $a$ in the
factor $w$.
\end{defn}
The following theorem justifies the equivalency between three concepts in geometry, combinatorics and algebra [3].
\bth\label{equiv}
Let $x=x_1x_2x_3\ldots$ be an infinite word. The following statements are equivalent:

(i) $x$ is the cutting sequence of a line with irrational slope.

(ii) $x$ is a Sturmian word.

(iii) $x$ is a 1-balanced and aperiodic word.
\end{thm}

\section{Three dimensional case}\label{three-d}
In agreement with the two dimensional case, we again use $D^+$ to present the first octant of the Euclidean space and use $G$ for the integral cube grid in $D^+$. By $L=(\theta ,\phi)$ we denote the line passing through the origin with the azimuthal angle $\theta$ and the polar angle $\phi$ in the first octant. Similar to the two dimensional case, we consider {\it the intersection sequence} of $L$ with respect with the cubic grid. In this case, one $0$ is written for each intersection with a face parallel to the $xy$ plane, one $1$ is written for each intersection with a face parallel to the $xz$ plane and one $2$ is written for each intersection with a face parallel to $yz$ plane, so a ternary sequence of letters $\{0,1,2\}$ is obtained. The intersection sequence of the line $L$ is denoted by $Is(L)$. A natural question is whether a ternary sequence is an intersection sequence of a line. To answer this question the following theorem is stated.
\begin{figure}
	\centering
	\includegraphics[width=3.8 cm,height=4.2 cm]{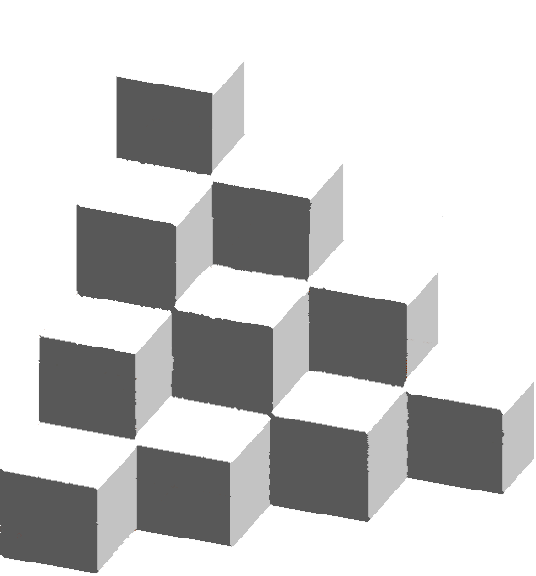}
	\caption{The cube grid}
	\label{grid}
\end{figure}
\bth\label{linearity}
Let $C$ be a line in the $D^+$ and $S$ be the intersection sequence of $C$ with the grid $G$. Then
%An infinite ternary sequence $S$ can be the intersection sequence of a line $L$ in $D^+$ if and only if be the intersection sequence of a curve $C$ with the grid $G$. $C$ is a line in $D^+$ if
 by removing each of the 0's, 1's or 2's from $S$, the resulting binary sequence is the cutting sequence of a line in the Euclidean plane.
\end{thm}
\begin{proof}
  The orthogonal projections of $C$ in the two planes $xy$ and $yz$ are lines.
%, if and only if $C$ is a line itself
 Now consider the intersection point of $C$ with a face of grid $G$ parallel to the $xy$ plane, which is denoted by $P$. The orthogonal projection $P'$ of $P$ in the $yz$ plane is an intersection point of $C|_{yz}$ with one of the horizontal lines of the square grid of the $yz$ plane. So the binary sequence obtained by removing 2's from the ternary intersection sequence of $C$ is the cutting sequence of the orthogonal projection of $C$ in the square grid of the $yz$ plane. We have the same scenario with $0$'s and $1$'s.
\end{proof}
\subsection{Finding $\theta$ and $\phi$}
\begin{figure}
  \centering
  \includegraphics[width=4 cm,height=4 cm]{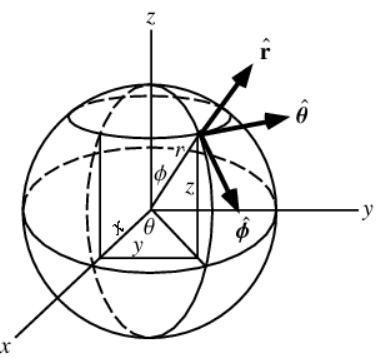}
  \caption{The spherical coordinates}
  \label{spher}
\end{figure}
Consider a line $L$ in $D^+$ with the intersection sequence $Is(L)$. Let $\lambda_{xy}$ be the slope of the line $L|_{xy}$. From Theorem \ref{derivation}, deriving the binary sequence $Cs(L|_{xy})$ results in the value of $\lambda_{xy}$. Then $\theta = \arctan(\lambda_{xy})$. To calculate $\phi$, we have $r^2=x^2+y^2+z^2$ and $\cos(\phi)=\frac{z}{r}$ (see Figure \ref{spher}). Therefore
\begin{eqnarray}
L=\left(\arctan(\lambda_{xy}) , \arccos\left(\sqrt{\frac{1}{{\lambda}^2_{xz}+\frac{1}{{\lambda}^2_{yz}}+1}}\right)\right),
\end{eqnarray}
where $\lambda_{xy}$ is the slope of the line $L|_{xy}$, and $\lambda_{xz}$ is the slope of the line $L|_{xz}$.

For example, consider the simple periodic sequence $T=00120012001200\ldots$. By removing all 2's from $T$, the obtained binary sequence $T_{yz}$ is the cutting sequence of a line with slope $2$. In other words, $T_{yz}=Cs(2)$ and $T_{xz}=Cs(\frac{1}{2})$. Similarly, by removing all 0's from $T$, $T_{xy}=Cs(1)$. So the ternary sequence $T$ is the intersection sequence of the line $L=(\frac{\pi}{4},\arccos(\sqrt{\frac{4}{21}}))$ in the Euclidean space. The following example shows a ternary sequence which is not the intersection sequence of a line.
\begin{exmpl}% PlainText
\emph{The} Tribonacci \emph{word}
\begin{eqnarray}
t = 010201001020101020100102\ldots
\end{eqnarray}
\emph{is a ternary sequence constructed by the recursive form $t_{n+3}=t_{n+2}t_{n+1}t_{n}$, with $t_1=0,t_2=01,t_3=0102$. This ternary sequence is 2-balanced and for each integer $n>0$, $C(t,n)=2n+1$. Now consider the binary sequence $t_{xz}=0^220^420^320^420^220\ldots$ obtained by removing all $1$'s. By Theorem \ref{form} we know that there is no line with the cutting sequence $t_{xz}$. So The Tribonacci word is not the intersection sequence of any line.} $\square$
\end{exmpl}

\section{Complexity of the intersection sequences of lines} \label{comp-sec}
In [6] it is stated that the intersection sequences of lines and 2-balanced ternary sequences are the same. Unlike the binary case, sequences with minimal complexity and 1-balanced sequences are not the same in the ternary case [5, 7]. But we can construct some 2-balanced ternary sequences with minimal complexity $n+2$, for each integer $n>0$. For example, the novel sequence
\begin{eqnarray}\label{defSM}
S^M=01020101020102010102010102010201010201020101020\ldots
\end{eqnarray}
is a ternary sequence constructed by replacing all $00$ factors in the Fibonacci word with $020$. We have the following interesting property about this sequence.
\bth\label{prjSM}
The ternary sequence $S^M$ is the intersection sequence of a line $L^M$ in which  $Cs(L^M|_{xy})=Cs(\varphi)$, $Cs(L^M|_{xz})=Cs(\frac{1}{\varphi+1})$ and   $Cs(L^M|_{yz})=Cs(\varphi)$.
\end{thm}
\begin{proof}
Removing all $2$'s from $S^M$ we reconstruct the original Fibonacci sequence. Thus $Cs(L^M|_{yz})=Cs(\varphi)$. Removing all $0$'s from $S^M$ results in the sequence of the projection $L^M$ on the $xy$ plane. Applying the following replacements to the Fibonacci word also constructs the $S^M_{xy}$.
\begin{align*}
    01 &\to \textbf{1}\\
    0  &\to \textbf{2}
\end{align*}
These replacements result in a sequence that coincides with the derivation of the Fibonacci word with respect to the factors $01$ and $0$, such that the 2's are replaced by the 0's in turn. So the constructed sequence is equivalent to $Cs^{(1)}(\varphi)$.
%Clearly, the Fibonacci word is \textit{derived} in a way that the $01$ factors are replaced by $1$ and after that, the remain $0$'s are replaced by $2$'s (instead of $0'$).
By Theorem \ref{derivation}, $Cs(L^M|_{xy})= Cs(\frac{1}{\frac{1}{\varphi}})=Cs(\varphi)$. Removing 1's results in a sequence in which the frequency of 0's is one greater than the frequency of 1's in $S^M_{xy}$. So by Theorem \ref{cont-frac} we have $Cs(L^M|_{xz})=Cs(\frac{1}{\varphi+1})$. %=Cs(\frac{1}{\varphi^2})
Since $\lambda_{xz}=\frac{1}{\lambda_{yz}\times\lambda_{xy}}$, there is a line $L^M\in\rl^3$ such that $L^M=(\arctan(\varphi), \arccos(\frac{1}{4(\varphi+1)})$ and $Is(L^M)=S^M$.
%we have $Cs(L^M|_{xz})=Cs(\frac{1}{\varphi^2})=Cs(\frac{1}{\varphi+1})$.
\end{proof}
%By this theorem we understand that $L^M=(\arctan(\varphi), \arccos(\frac{1}{4\times(\varphi+1)})$.
\bth \label{complexity}
For each integer $n>0$, $C(S^M,n)=n+2$.
\end{thm}
\begin{proof}
By the structure of the sequence $S^M$, $S^M_{2k+1}=0$ for  $k\geq0$. We calculate the $n$-complexity of the sequence $S^M$. In $f_n(S^M)$, each factor starting with 0 has $\lceil \frac{n}{2} \rceil$ occurrences of $0$ and $\lfloor \frac{n}{2}\rfloor$ occurrences of $1$ or $2$. Since the binary sequence $S^M_{xy}$ is the cutting sequence of a line with an irrational slope, $C(S^M_{xy},\lfloor\frac{n}{2}\rfloor)=\lfloor\frac{n}{2}\rfloor+1$. Thus, there are $\lfloor\frac{n}{2}\rfloor+1$ factors in $f_n(S^M)$ starting with 0. On the other hand, each factor with 0 as its second letter has $\lfloor \frac{n}{2} \rfloor$ letters of $0$ and $\lceil \frac{n}{2}\rceil$ letters of $1$ or $2$. Since
$C(S^M_{xy},\lceil\frac{n}{2}\rceil)=\lceil\frac{n}{2}\rceil+1$, there are $\lceil\frac{n}{2}\rceil+1$ factors in $f_n(S^M)$ with 0 as the second letter. Therefore, there are $\lfloor\frac{n}{2}\rfloor+1+\lceil\frac{n}{2}\rceil+1=n+2$ factors with length $n$ for each integer $n>0$ in the sequence $S^M$. In other words, for each integer $n>0$, $C(S^M,n)=n+2$.
\end{proof}
\begin{defn}
Let $x=x_1x_2x_3\ldots$ be an infinite sequence over the alphabet $A=\{0,1,2,\ldots,k\}$. The binary sequence obtained by applying the following rules to the sequence $x$ is a \emph{sequential projection} of $x$, which is denoted by $P_ax$.
\begin{center}
\beq
a\rightarrow \textbf{0} &\hbox{, for $a\in A$}\\
b\rightarrow \textbf{1} &\hbox{, for each $b\in A\backslash\{a\}$}
\eeq
\end{center}
\end{defn}

\bth\label{seq-prj}
Consider the alphabet $A=\{0,1,2\}$ and let $S=x_1x_2x_3\ldots$ be an infinite sequence over the alphabet $A$ such that for each positive integer $n$, $C(S,n)=n+2$. Suppose there is an $a\in A$ such that $P_aS=Cs(1)$. Then the binary sequence obtained by removing $a$'s from the sequence $S$ is cutting sequence of a line.
\end{thm}
%Sketch of proof
\begin{proof}
Without loss of generality, suppose $a=0$. The occurrence of 0's in the sequence $S$ is periodic but the factor complexity function of this sequence is strictly increasing. So, the binary sequence $S_{xy}$ is aperiodic and for each positive integer $n$, $C(S_{xy},n)\geq n+1$. Applying the same strategy as the proof of Theorem \ref{complexity} results in $C(S,n)=C(S_{xy},\lfloor\frac{n}{2}\rfloor)+C(S_{xy},\lceil\frac{n}{2}\rceil)=n+2$. By the previous argument we should have  $C(S_{xy},\lfloor\frac{n}{2}\rfloor)=\lfloor\frac{n}{2}\rfloor+1$ and $C(S_{xy},\lceil\frac{n}{2}\rceil)=\lceil\frac{n}{2}\rceil+1$. By Theorem \ref{equiv}, the binary sequence $S_{xy}$ is the cutting sequence of a line.
\end{proof}

\begin{defn}
Let $x=x_1x_2x_3\ldots x_n$ be a finite sequence. The sequence $x^r=x_nx_{n-1}x_{n-2}\ldots x_1$ is the \emph{reverse} of $x$.
\end{defn}
It is proved in [4] that for any finite sequence $u\in f_n(x)$ where $x$ is Sturmian, $u^r\in f_n(x)$.
\bth
Let the finite sequence $u$ be a factor of the infinite ternary sequence $S^M$ defined in (\ref{defSM}). Then the sequence $u^r$ is a factor of $S^M$.
\end{thm}
\begin{proof}
By Theorem \ref{prjSM} the binary sequence $S^M_{xy}$ is Sturmian. Thus, if $u_{xy}\in f_n(S^M_{xy})$, then $u^r_{xy}\in f_n(S^M_{xy})$. Since  $S^M_{2k+1}=0$ for each integer $k\geq0$, the ternary sequence $u'=0(u^r_{xy})_10(u^r_{xy})_20\ldots 0(u^r_{xy})_n0$ is a factor of the infinite ternary sequence $S^M$. Since $u^r$ is a factor of $u'$, $u^r$ is a factor of $S^M$.
\end{proof}
\begin{defn}
Let $x=x_1x_2x_3\ldots x_n$ be a finite sequence. The sequence $x$ is \emph{palindrome} if and only if for each integer $k>0$, $x_k=x_{n-k+1}$. Clearly, $x$ is palindrome if and only if $x=x^r$.
\end{defn}
\begin{defn}
Let $x=x_1x_2x_3\ldots$ be an infinite sequence. For each integer $n>0$, the function $P(x,n)$ counts the number of factors of $x$ with length $n$ which are palindromes, i.e., $P(x,n)=Card(\{u\in f_n(x)|u=u^r\})$.
\end{defn}
\begin{lem}\label{palindrome}
Let $x=x_1x_2x_3\ldots$ be an infinite binary sequence. Then $x$ is Sturmian if and only if  for each integer $k\geq0$, $P(x,2k+1)=2$ and $P(x,2k)=1$
\end{lem}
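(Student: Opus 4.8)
The plan is to establish the two implications separately. For the direction "$x$ Sturmian $\Rightarrow$ the stated palindrome counts", I would reduce to the characteristic Sturmian word of the relevant slope and read the palindromic factors off its standard-word description (equivalently, off the rotation / three-distance picture); this part is essentially a counting exercise. For the converse, I would feed the hypothesis on $P(x,\cdot)$ into the general inequality linking palindromic and factor complexity, first deducing $C(x,n)\ge n+1$, and then work a little harder for the reverse bound $C(x,n)\le n+1$, which together with aperiodicity gives that $x$ is Sturmian.

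\emph{Forward direction.} Suppose $x$ is Sturmian. Then $P(x,0)=1$ (the empty word), and since $C(x,1)=2$ both letters occur, so the two length-$1$ palindromes are $0$ and $1$ and $P(x,1)=2$. For $n\ge 2$ I would use that $x$ is closed under reversal ([4], as already invoked above) and has the same factor set as the characteristic Sturmian word $c_\alpha$ of its slope $\alpha$. The word $c_\alpha$ is the limit of its standard words, each standard word of length $\ge 2$ being a palindrome followed by $01$ or $10$; the resulting palindromic prefixes of $c_\alpha$ are cofinal among all palindromic factors of $x$, in the sense that every palindromic factor of $x$ occurs centred inside one of them. Counting the palindromic factors of length $n$ inside this nested family — or, equivalently, observing that a factor of length $n$ corresponds to one of the $n+1$ intervals cut out on the circle by the points $-\alpha,-2\alpha,\dots,-n\alpha$ and is a palindrome exactly when that interval is symmetric about one of two fixed centres, so that the three-distance theorem leaves precisely one such interval when $n$ is even and two when $n$ is odd — yields $P(x,2k)=1$ and $P(x,2k+1)=2$ for all $k\ge 0$.

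\emph{Backward direction.} Suppose $P(x,2k+1)=2$ and $P(x,2k)=1$ for all $k\ge 0$. Since one of $n,n+1$ is even, $P(x,n)+P(x,n+1)=3$ for every $n\ge 0$; in particular $x$ has palindromic factors of every length, hence of unbounded length. I would then invoke the general inequality $P(x,n)+P(x,n+1)\le C(x,n+1)-C(x,n)+2$, valid for words having palindromic factors of arbitrarily large length, to conclude $C(x,n+1)-C(x,n)\ge 1$, whence $C(x,n)\ge n+1$ for all $n$ (using $C(x,0)=1$); in particular $C(x,n)\to\infty$, so $x$ is not eventually periodic. It remains to prove $C(x,n)\le n+1$. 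For this I would extract structure from the hypothesis: the central factor of the unique even palindrome $u_{2k+2}$ of length $2k$ is again a palindrome, hence equals the unique palindrome $u_{2k}$ of length $2k$, so $u_{2k+2}=c_k\,u_{2k}\,c_k$ for some letter $c_k$, and therefore $u_{2k}=w_k^{\,r}w_k$ with $w_k=c_0c_1\cdots c_{k-1}$; a parallel analysis pairs up the two odd-length palindromes of consecutive odd lengths. This rigidity should force $x$ to have exactly one right-special factor of each length, i.e. $C(x,n+1)-C(x,n)=1$ (equivalently, $x$ is rich and the displayed inequality is an equality), so that $C(x,n)=n+1$; being aperiodic with this complexity, $x$ is Sturmian by definition.

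\emph{Main obstacle.} The forward direction is essentially bookkeeping once the standard-word (or rotation) description is in place. The real work is the last step of the converse: the palindromic hypothesis yields directly only a lower bound on $C(x,n)$, and ruling out $C(x,n)>n+1$ amounts to converting the nesting $u_{2k}=w_k^{\,r}w_k$ of the even palindromes and the pairing of the odd ones into the statement "at most one right-special factor of each length" (equivalently, richness of $x$). Making that conversion rigorous — in effect reconstructing the Sturmian combinatorics from the palindromic data alone — is the part that demands genuine care.
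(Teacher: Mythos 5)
First, a point of comparison: the paper does not actually prove this lemma --- it states ``One can find a proof of this lemma in [4]'' and moves on (the result is the Droubay--Pirillo characterization of Sturmian words by palindromic complexity; there is no in-paper argument to measure yours against). Judged on its own terms, your proposal is an outline rather than a proof, and it has a genuine gap exactly where you flag one. In the converse direction, everything up to $C(x,n)\ge n+1$ is the easy half; the entire content of the theorem is the upper bound $C(x,n)\le n+1$, and your text only says that the nesting $u_{2k}=w_k^{\,r}w_k$ of the unique even palindromes ``should force'' at most one right-special factor of each length. No mechanism is supplied for passing from information about the very sparse set of palindromic factors (one factor per even length, two per odd length) to a statement about \emph{all} factors: nothing you write explains why the remaining $C(x,2k)-1$ factors of length $2k$ cannot support a second right-special factor. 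That passage is precisely where the published proofs do their work (via bilateral extensions and Rauzy-graph bookkeeping, or via return words and richness), and leaving it at ``demands genuine care'' means the converse is not proved.

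Two secondary issues. The inequality $P(x,n)+P(x,n+1)\le C(x,n+1)-C(x,n)+2$ that you use for the lower bound is established in the literature for words whose factor set is closed under reversal (and, in some formulations, which are uniformly recurrent); at the point where you invoke it you know only the palindrome counts, so you must either prove the inequality in the generality you need or first extract closure under reversal from the hypothesis. In the forward direction your rotation picture is the standard route, but the assertion that exactly one (respectively two) of the $n+1$ intervals is symmetric about a fixed point of the reversal involution when $n$ is even (respectively odd) \emph{is} the whole computation: one must identify the involution, check when its two fixed points fall in the interior of intervals rather than on partition endpoints, and verify that an interval containing a fixed point is in fact centred there. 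These are all doable, but as written the forward direction asserts the answer rather than deriving it. In short: the skeleton is correct and each step points to a real result in the literature, but the theorem's actual difficulty --- the converse upper bound on factor complexity --- is not addressed.
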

One can find a proof of this lemma in [4]. We want to state a similar classification in the case of ternary sequences with a sequential projection of slope $1$.
\bth\label{palindromic-comp}
For each integer $k\geq 0$, $P(S^M,2k)=0$ and $P(S^M,2k+1)=3$.
\end{thm}
\begin{proof}
We compute the palindromic complexity in two cases. In the first case, we have $P(S^M,2)=0$. So for each integer $k>0$, $P(S^M,2k)=0$. In the second case, $P(S^M,1)=3$. For $k>0$, since the parity of the places of all 0 letters are equal, the appearance of 0's in the odd length factors is symmetric. So to check that the factor $u\in f_{2k+1}(S^M)$ is a palindrome, it is enough to check if the binary sequence $u_{xy}$ is palindrome. As in the proof of Theorem \ref{complexity}, there are $\lfloor\frac{2k+1}{2}\rfloor+1$ factors with length $2k+1$ with the first letter $0$ and there are $\lceil\frac{2k+1}{2}\rceil+1$ factors with length $2k+1$ with the second letter $0$. So, $P(S^M,2k+1)=P(S^M_{xy},\lfloor\frac{2k+1}{2}\rfloor)+P(S^M_{xy},\lceil\frac{2k+1}{2}\rceil)$. Since the parity of $\lfloor\frac{2k+1}{2}\rfloor+1$ and $\lceil\frac{2k+1}{2}\rceil+1$ is always different, then for each integer $k\geq0$, $P(S^M,2k+1)=3$, by Lemma \ref{palindrome}.
\end{proof}

\begin{cor}
Let $x=x_1x_2x_3\ldots$ be an aperiodic infinite ternary sequence over alphabet $A=\{0,1,2\}$. Suppose there is an $a\in A$ such that $P_ax=Cs(1)$. Then $P(x,2k)=0$ and $P(x,2k+1)=3$, for $k=0,1,2,\ldots$ .
\end{cor}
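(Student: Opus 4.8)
The plan is to repeat, in this slightly more general situation, the argument used for Theorem~\ref{palindromic-comp}. After relabelling the alphabet we may assume $a=0$. The hypothesis $P_{0}x=Cs(1)$ then says that in $x$ the letter $0$ and the non‑zero letters strictly alternate, i.e. $0$ occupies exactly the positions of one fixed parity. Consequently $x$ is completely determined by the binary word $w:=x_{xy}$ over $\{1,2\}$ obtained by deleting every $0$ from $x$, and $x$ is aperiodic if and only if $w$ is.

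For even lengths nothing beyond this structure is needed: in any factor $u$ of $x$ of length $2k$ the first and last letters sit at positions of opposite parity, so exactly one of them is a $0$; hence $u\neq u^{r}$, and $P(x,2k)=0$ for every $k\ge 0$.

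For odd lengths I would argue exactly as in the proof of Theorem~\ref{complexity}. A factor $u\in f_{2k+1}(x)$ has one of two shapes according to the parity of its initial position: either $u=0u_{2}0u_{4}\cdots 0u_{2k}0$, with $u_{xy}=u_{2}u_{4}\cdots u_{2k}$ of length $k$, or $u=u_{1}0u_{3}\cdots 0u_{2k+1}$, with $u_{xy}=u_{1}u_{3}\cdots u_{2k+1}$ of length $k+1$. Since in each shape the $0$'s are placed symmetrically, $u=u^{r}$ is equivalent to $u_{xy}=u_{xy}^{r}$, and factors of the two shapes are never equal. Counting the two shapes separately gives
\[ P(x,2k+1)=P(w,k)+P(w,k+1), \]
with the convention $P(w,0)=1$. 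Now $w$ is aperiodic, and by Theorem~\ref{seq-prj} it is the cutting sequence of a line; being aperiodic, that line has irrational slope, so $w$ is Sturmian by Theorem~\ref{equiv}. Lemma~\ref{palindrome} then gives $P(w,m)=1$ for $m$ even and $P(w,m)=2$ for $m$ odd; since exactly one of $k$, $k+1$ is even, $P(x,2k+1)=1+2=3$ for all $k\ge 0$, which completes the proof.

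The delicate point is the Sturmian property of $w$: aperiodicity of $x$ by itself does not force it (one could take $w$ to be the Thue--Morse word), so the argument genuinely rests on Theorem~\ref{seq-prj}, and hence on the minimal factor complexity $C(x,n)=n+2$ that the ternary sequences of this section enjoy; the remaining steps are just the bookkeeping already carried out for Theorem~\ref{palindromic-comp}.
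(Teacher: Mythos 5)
Your argument follows the same route the paper intends: reduce to the binary word $w$ obtained by deleting the distinguished letter, kill the even lengths by the parity of the two endpoints, and compute $P(x,2k+1)=P(w,k)+P(w,k+1)$ exactly as in the proof of Theorem~\ref{palindromic-comp}; the paper's own proof of the corollary is literally the single sentence that this technique transfers. Where you differ is that you make explicit the one step that sentence glosses over: Lemma~\ref{palindrome} can only be applied to $w$ if $w$ is Sturmian, and the corollary's stated hypotheses (aperiodicity of $x$ together with $P_ax=Cs(1)$) do not deliver this. Your Thue--Morse remark is in fact a genuine counterexample to the literal statement: interleaving $0$ with the Thue--Morse word over $\{1,2\}$ produces an aperiodic ternary $x$ with $P_0x=Cs(1)$, and your own (correct) decomposition gives $P(x,5)=P(w,2)+P(w,3)=2+2=4\neq 3$. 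So the statement needs the additional hypothesis $C(x,n)=n+2$ (equivalently, in this setting, that $x$ is one of the linear intersection sequences of this section), which is exactly what you import in order to invoke Theorem~\ref{seq-prj}, conclude that $w$ is the cutting sequence of a line of irrational slope, and hence Sturmian by Theorem~\ref{equiv}. With that hypothesis added, your bookkeeping (including the boundary case $k=0$ with the convention $P(w,0)=1$) is correct and matches the paper's method; without it, no proof is possible. In short: same approach as the paper, but you have correctly located, and repaired, a gap that the paper's one-line proof does not address.
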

\begin{proof}
Without loss of generality we suppose that $a=0$. Since $P_0x$ has slope 1, letters of 0 and 1 appear alternatingly in $P_0x$. From now on, the technique of the proof of the Theorem \ref{palindromic-comp} can be applied to the sequence $P_0x$ instead of $S^M$.
\end{proof}

\begin{defn}
Let $x=x_1x_2x_3...$ be an infinite word in the alphabet $A=\{0,1,\ldots,k\}$. The (directed) Rauzy graph $G_n(x)$ of order $n$ is a (directed) graph with vertex set $F_n(x)$ and the edge set
\beq
\{(bv,va)| a,b\in A, bva\in f_{n+1}(x)\}
\eeq
\end{defn}

We want to classify the Rauzy graphs of linear intersection sequences with complexity $n+2$.
\bth\label{Rauzy}
Consider the Rauzy graph $G_n(S^M)$. The (directed) degree of all vertices of $G_n(S^M)$ is $1$ except one vertex with the (directed) degree $2$.
\end{thm}
\begin{figure}
  \centering
 \includegraphics[width=4 cm,height=4 cm]{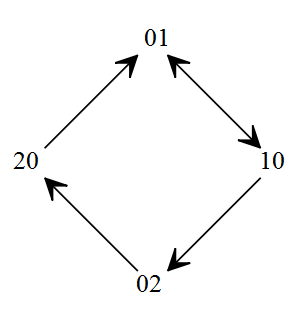}
 \caption{The graph $G_2(S^M)$.}
 \label{fig}
\end{figure}
\begin{proof}
Since the ternary sequence $S^M$ is infinite, for each factor $d\in f_n(S^M)$ there is a letter $a\in\{0,1,2\}$ for which the factor $d'=da$ is in $f_{n+1}(S^M)$. So the degree of each vertex of the graph $G_n(S^M)$ is at least 1. By Theorem \ref{complexity}, $C(S^M,n)=n+2$ so $C(S^M,1)=3$ and for each integer $n>1$, $C(S^M,n)=C(S^M,n-1)+1$. So there should be exactly one factor in $f_{n-1}(S^M)$ that generates two factors of $f_n(S^M)$. Thus there is a unique vertex $w\in G_n(S^M)$ with degree 2. The graph $G_2(S^M)$ is shown in the Figure \ref{fig}.
\end{proof}

Now we want to prove a generalization of Theorem \ref{seq-prj} in order to classify the linearity of the all ternary sequences with minimal complexity.
\bth
Let $S=x_1x_2x_3...$ be an infinite sequence such that for each positive integer $n$, $C(S,n)=n+2$. Then $S$ is an intersection sequence of a line.
\end{thm}
\begin{proof}
Two cases are considered. Suppose there exists a letter $a\in\{0,1,2\}$ such that $P_aS=Cs(1)$. Without loss of generality we can assume that $a=0$. Removing 0's from $S$ results in the sequence $S_{1,2}$. By Theorem \ref{seq-prj}, $C(S_{1,2},n)=n+1$, for each positive integer $n$, and so by Theorem \ref{equiv} there exists a line $\ell_{xy}$ with irrational slope $\lambda_{xy}$ which  the sequence $S_{1,2}$ is its cutting sequence. 
%of a line $\ell_{xy}$ with irrational slope $\lambda_{xy}$, by Theorem \ref{equiv}. 
From Theorem \ref{form} and Definition \ref{val},  $S_{1,2}\in\{12^{val(\ell_{xy})},12^{val(\ell_{xy})+1}\}^\N$ or $S_{1,2}\in\{21^{val(\ell_{xy})},21^{val(\ell_{xy})+1}\}^\N$.  By the symmetry we can assume that the first representation occurred. Since 0 letters in $S$ appear alternatingly, the number of 2's between two consecutive 1's is one less than the number of 0's. So each factor of $S$ between two consecutive $1$'s contains $val(\ell_{xy})+1$ or $val(\ell_{xy})+2$  letters of $0$ with respect to the occurrence of $2$ in that factor, by Definition \ref{deriv} one can asserts that the first derivation of $S_{1,2}$ and $S_{0,1}$ are the same. 
Therefore by Theorem \ref{cont-frac} the line $\ell_{yz}$ with slope $\lambda_{yz}=\frac{1}{\lambda_{xy}+1}$ has $S_{0,1}$ as its cutting sequence. By a similar reasoning it is deduced that the line $\ell_{xz}$ with the slope $\lambda_{xz}=1+\frac{1}{\lambda_{xy}}$ has $S_{0,2}$ as its cutting sequence. Because $\lambda_{xy}=\frac{1}{\lambda_{xz}\times \lambda_{yz}}$, there exists a line $L\s\rl^3$ such that its orthogonal projections on $xy$, $xz$ and $yz$ planes are $\ell_{xy}$, $\ell_{xz}$ and $\ell_{yz}$, respectively. If we define $S'=x_1'x_2'x_3'\ldots$ as the intersection sequence of $L$ in $\rl^3$ then by a simple contradiction one can show that $S=S'$. For this, let $k$ be the first place in which $x_k\neq x_k'$  and consider the sequence $S_{x_k,x_k'}$ and $S'_{x_k,x_k'}$. By the above construction of $S'$, we should have $S_{x_k,x_k'}=S'_{x_k,x_k'}$ which contradicts the difference of $S$ and $S'$ in the $k$th letter.

If there is no $a\in\{0,1,2\}$ such that $P_ax=Cs(1)$, then there exists an $A\in\{0,1,2\}$ such that $AA\in f_2(x)$. Renaming $\{A,B,C\}=\{0,1,2\}$, the graph $G_2(S)$ has the form shown in Figure \ref{fig2}.
\begin{figure}
\centering
\includegraphics[width=4 cm,height=5 cm]{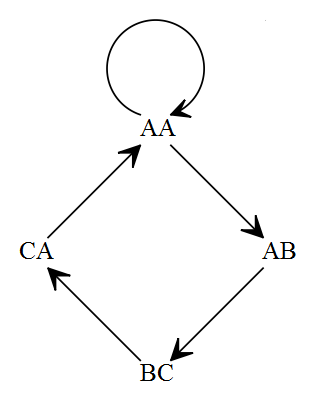}
\caption{The graph $G_2(S)$}
\label{fig2}
\end{figure}
So there is an integer $n\in\N$ such that the sequence $S$ has the following form
\begin{eqnarray*}
S\in\{A^nBC,A^{n+1}BC\}^{\mathbb{N}}
\end{eqnarray*}
Removing all $A$ letters from $S$ results in the cutting sequence of the line $\ell_{xy}:y=x$. On the other hand, removing $C$ from $S$ results in the sequence $S_{A,B}$ which is the cutting sequence of a line with slope $\lambda$, by Theorem \ref{equiv}. Similarly, $S_{A,C}=Cs(\lambda)$. So the ternary sequence $S$ is the intersection sequence of the line $L\s\rl^3$ with slopes $1$, $\lambda$ and $\frac{1}{\lambda}$. So a similar situation occurs as the previous case.
\end{proof}

\noindent {\bf Acknowledgment} - The authors would like to thank Professor P. Arnoux for very helpful communications.
\vskip .8 cm
\setlinespacing{1.3}
\noindent {\bf References}

\noindent [1] \textsc{Christoffel}, E. B. ; \emph{Observatio arithmetica}, Annali di Matematica 6, 1875,145-152.

\noindent [2] \textsc{Markoff}, A. A. ; \emph{Sur les formes quadratiques binaires indefinies}, Mathematische Annalen 15, 1879, 381-496;

\noindent [3] \textsc{Series}, C ; The Geometry of Markoff Numbers, \emph{The Mathematical Intelligencer} Vol 7, No 3. 1985.

\noindent[4] \textsc{Morse}, M. and \textsc{Hedlund}, G. A. ; \emph{Symbolic dynamics}, Amer. J. Math. 60 (1938) 815-866.

\noindent [5] \textsc{Berstel}, J. ; \emph{Recent results on extensions of Sturmian words}, International Journal of Algebra and Computation, Vol. 12 (2002) 371-385.

\noindent [6] \textsc{Durand}, F. and \textsc{Guerziz}, A. and \textsc{koskas}, M. ; Words and morphisms with Sturmian erasures, \emph{Bull. Belg. Math. Soc. Simon Stevin}, 11, No. 4 (2004), 575-588.

\noindent [7] \textsc{Balkov\'a, L\'ubomra, Pelantov\'a, Edita, and Starosta, \v{S}t\v{e}p\'an} ; Sturmian jungle (or garden?) on multiliteral alphabets,  \it{RAIRO - Theoretical Informatics and Applications}, 44, No. 4 (2011), 443-470.

\vskip .5 cm

{\it Email:}
 \verb"saleh"@ \verb"helli.ir "

\hskip 1.2 cm \verb"jahangiri"@ \verb"ipm.ir"
\end{document}